 \newtheorem{Thm}{Theorem}[section]
 \newtheorem{Lem}[Thm]{Lemma}
 \newtheorem{Prop}[Thm]{Proposition}
 \newtheorem{Cor}[Thm]{Corollary}
 \newtheorem{Conj}[Thm]{Conjecture}
\theoremstyle{remark}
 \newtheorem{Expl}[Thm]{Example}
 \newtheorem{Rem}[Thm]{Remark}
\theoremstyle{definition}
 \newtheorem{Def}[Thm]{Definition}
\numberwithin{equation}{section}
\newcommand\N{\mathbb N}
\newcommand\Z{\mathbb Z}
\newcommand\braid{c}
\newcommand\tr{\operatorname{tr}}
\newcommand\End{\operatorname{End}}
\newcommand\adhit{\triangleright}
\newcommand\inv{^{-1}}
\def\HM#1.#2.#3.#4.{{^{#1}_{#3}\mathcal M^{#2}_{#4}}}
\newcommand\lYD[1]{{^{#1}_{#1}\mathcal{YD}}}
\newcommand\id{\operatorname{id}}
\newcommand\ot{\otimes}
\newcommand{\ou}[1]{\underset{{#1}}{\otimes}}
\newcommand{\co}[1]{\mathrel{\mathop{\Box}_{#1}}}
\newcommand\Ind{\operatorname{Ind}}
\newcommand\Stab{\operatorname{Stab}}
\newcommand\Rep{\operatorname{Rep}}
\newcommand\Irr{\operatorname{Irr}}
\newcommand\Vect{\operatorname{Vect}}
\newcommand\Tr{\operatorname{Tr}}
\newcommand\lquot{\backslash}
\newcommand\Hom{\operatorname{Hom}}
\newcommand\Indtobim{\mathcal F}
\newcommand\IndtoYD{\mathcal G}
\newcommand\Q{\mathbb Q}
\newcommand\CC{\mathbb C}
\newcommand\C{\mathcal C}
\newcommand\galaut{\sigma}
\newcommand\adams{\psi}
\newcommand\auta{\psi}
\newcommand\autb{\tilde\psi}
\newcommand\doubleadams{\hat\psi}
\newcommand\semidir\rtimes
\begin{document}
\title[Quasitensor autoequivalences]{Some quasitensor autoequivalences of Drinfeld doubles of finite groups}
\author{Peter Schauenburg}
\address{Institut de Math{\'e}matiques de Bourgogne, UMR 5584 du CNRS
\\
Universit{\'e} de Bourgogne\\
Facult{\'e} des Sciences Mirande\\
9 avenue Alain Savary\\
BP 47870 21078 Dijon Cedex\\
France
}
\email{peter.schauenburg@u-bourgogne.fr}
\subjclass{18D10,16T05,20C15}

\keywords{}
\thanks{Research partially supported through a FABER Grant by the \emph{Conseil régional de Bourgogne}}
\begin{abstract}We report on two classes of autoequivalences of the category of Yetter-Drinfeld modules over a finite group, or, equivalently the Drinfeld center of the category of representations of a finite group. Both operations are related to the $r$-th power operation, with $r$ relatively prime to the exponent of the group. One is defined more generally for the group-theoretical fusion category defined by a finite group and an arbitrary subgroup, while the other seems particular to the case of Yetter-Drinfeld modules.  Both autoequivalences preserve higher Frobenius-Schur indicators up to Galois conjugation, and they preserve tensor products, although neither of them can in general be endowed with the structure of a monoidal functor.
\end{abstract}
\maketitle

\section{Introduction}
\label{sec:introduction}

Let $G$ be a finite group of exponent $e$. The complex characters of $G$ take values in the cyclotomic field $\Q(\zeta_e)$, where $\zeta_e$ is a primitive $e$-th root of unity. The Galois conjugates of a character $\chi$, i.~e.\ the images of $\chi$ under the operation of the Galois group of $\Q(\zeta_e)$, are also characters of $G$. This gives an operation of the group of prime residues modulo $e$ on the characters of $G$, or an operation by autoequivalences on the category $\Rep(G)$. Another way of describing this operation is as a  special case of an Adams operator, mapping $\chi$ to the character $\adams_r(\chi)$ characterized by $\adams_r(\chi)(g)=\chi(g^r)$ for $r$ relatively prime to $e$. (We admit that the reference to Adams operators is something of an exaggeration, since the latter are defined for any $r$, and the case where $r$ is relatively prime to $e$ is only a very particular one.) We note that Galois conjugation of characters preserves higher Frobenius-Schur indicators and tensor products. However, as we learnded from Alexei Davydov, it does not in general, give rise to a \emph{monoidal} autoequivalence of $\Rep(G)$. Thus, Galois conjugation defines what is sometimes called a quasitensor equivalence of $\Rep(G)$, in particular an automorphism of the Grothendieck ring of $\Rep(G)$.

We will study operations with somewhat similar behavior, also related to taking $r$-th powers (with $r$ relatively prime to $e$), on the Drinfeld center of $\Rep(G)$, i.~e.\ the category of Yetter-Drinfeld modules $\lYD G$, which can also be described as the module category of the Drinfeld double $D(G)$.

The first and rather more obvious operation, treated in \cref{sec:an-auto-grad}, is defined more generally on the category $\HM\Gamma..G.G.$ of $\CC G$-bimodules graded by a finite group $\Gamma$ containing $G$ as a subgroup; the case of Yetter-Drinfeld modules is recovered by choosing $\Gamma=G\times G$ with $G$ embedded diagonally. Simple objects in $\HM\Gamma..G.G.$ are described ``locally'' by representations of certain stabilizer subgroups of the group $G$. The group of prime residues modulo $e$ can thus be made to act on $\HM\Gamma..G.G.$ by Galois conjugating the local characters of the stabilizer subgroups. By the formula for higher Frobenius-Schur indicators obtained recently in \cite{Sch:CHFSIFCCIFG} (and much earlier for the case of Drinfeld doubles in \cite{KasSomZhu:HFSI}) it is obvious that the effect of this operation on the higher Frobenius-Schur indicators of an object is to Galois conjugate them as well. (To compare with the special case of group representations, note that higher Frobenius-Schur in the latter case are integers.) It is less obvious, though perhaps not entirely surprising, that the Galois conjugation operation also preserves tensor products in $\HM\Gamma..G.G.$. 

The second operation, which we will introduce in \cref{sec:an-auto-yett}, is particular to the categories $\lYD G$ of Yetter-Drinfeld modules. It is defined simply by raising the degrees of all the homogeneous elements in a Yetter-Drinfeld modules to the $r$-th power. If we describe simple Yetter-Drinfeld modules ``locally'' by irreducible representations of the centralizers $C_G(g)$ of elements $g\in G$, then the operation does not change the representation of $C_G(g)$ at all, but views it as associated to the element $g^r$ instead of $g$. While both descriptions are very elementary, we could not see any elementary reason why the operation should behave as it does on higher Frobenius-Schur indicators (it preserves them up to Galois conjugation) or tensor products (it preserves them). We will use the Verlinde formula for multiplicities in the modular tensor category $\lYD G$, and the Bantay-type formula for indicators to prove these facts.

We note that the two operations on $\lYD G$ thus associated in different ways to the $r$-th power operation in $G$ do not usually coincide. Both give quasitensor autoequivalences that preserve higher Frobenius-Schur indicators up to Galois conjugation. Taking a suitable ``quotient'' of the two types of operations yields a quasitensor autoequivalence that preserves higher Frobenius-Schur indicators (instead of Galois conjugating them). However, even this ``better-behaved'' operation cannot be endowed, in general, with the structure of a monoidal functor. We regard it briefly in \cref{sec:an-auto-pres}. The fact that it conserves Frobenius-Schur indicators entails, \cref{Cor:counting-formula}, a certain combinatorial counting identity  for finite groups, based on the counting formulas in \cite{KasSomZhu:HFSI} involving higher Frobenius-Schur indicators.

The author's search for autoequivalences of $\lYD G$ was at first motivated by certain computational data: Keilberg \cite{MR2941565,MR2925444,MR3178056} and Courter \cite{MR3103664} have accumulated rich results on the higher Frobenius-Schur indicators of irreducible representations of the Drinfeld doubles of certain finite groups. Both for the groups ---semidirect products of cyclic groups with cyclic or dihedral groups---  treated by Keilberg, and for the symmetric groups treated by Courter (with heavy computer calculations) it turns out that many different representations can have identical sets of indicators (Courter terms these representations I-equivalent), without any automorphisms of the Drinfeld doubles at hand that would map the I-equivalent representations to each other. With respect to this phenomenon, the autoequivalences studied in the present paper are both a success (they explain why certain representations of the Drinfeld doubles are I-equivalent) and a failure (the larger part of I-equivalences between different representations remains unexplained).

In~\cref{sec:examples} we will look at a few small examples of the two types of operations on simple Yetter-Drinfeld modules, and in \cref{sec:doubl-symm-groups} we will discuss how the second type of operation fails utterly to help explain most of the I-equivalent objects in $\lYD{S_n}$ found by Courter: It seems that on $\lYD{S_n}$ the two generally different $r$-th power operations are identical, a fact that we cannot, for the moment, prove, but that we tested using the GAP software up to \newcommand\bishierher{$S_{23}$}\bishierher.

\section{Preliminaries}
\label{sec:preliminaries}

Let $e\in\N$ and denote by $\mathbb Q(\sqrt[e]1)$ the cyclotomic field obtained by adjoining a primitive $e$-th root of unity $\zeta_e$ to the rationals. For $r$ relatively prime to $e$ we denote by $\galaut_r$ the automorphism of $\mathbb Q(\sqrt[e]1)$ mapping $\zeta_e$ to $\zeta_e^r$.

Let $G$ be a finite group, $e=\exp(G)$, and $\chi$ a complex character of $G$. We denote by $\adams_r(\chi)$ the character defined by $\adams_r(\chi)(g)=\chi(g^r)$ for $g\in G$. The assignment $\psi_r$ is known as the $r$-th Adams operator; for general $r$ it gives a virtual character of $G$, but we will only be concerned with the case (somewhat trivial in the theory of Adams operators, cf.\ \cite[§12B]{MR632548}) that $r$ is relatively prime to $e$. The well-known fact that $\adams_r(\chi)$ is a character in this case follows from Brauer's theorem: In fact $\chi$ is the character of a representation $W$ which is defined over $\mathbb Q(\sqrt[e]1)$; that is, with respect to a suitable basis of $W$ the action of $g\in G$ is given by a matrix with entries in $\mathbb Q(\sqrt[e]1)$. Applying the map $\galaut_r$ to all the entries of all these matrices yields a conjugated representation, which we will denote by $\adams_r(W)$, and whose character is $\adams_r(\chi)$.

It is well-known (and trivial to check) that $\adams_r(V\ot W)\cong\adams_r(V)\ot\adams_r(W)$ and $\nu_m(\adams_r(V))=\nu_m(V)$ for all $V,W\in\Rep(G)$; the latter because higher Frobenius-Schur indicators of group representations are integers.

We will exhibit operators of somewhat similar behavior defined on certain group-theoretical fusion categories; to wit: One type of operator is defined on the category of graded bimodules $\HM\Gamma..G.G.$ with $\Gamma$ a finite group and $G\subset\Gamma$ a subgroup. The other type is defined on the category of Yetter-Drinfeld modules $\lYD G$, equivalent in fact to $\HM\Gamma..G.G.$ when we choose $G$ embedded diagonally in $\Gamma=G\times G$. The simple objects of $\HM\Gamma..G.G.$ can be described in terms of group representations. We need this description (which can be extracted from \cite{Zhu:HARRHA} and \cite{Sch:HAEMC}) very explicitly.

The category $\HM \Gamma..G.G.$ is defined as the category of $\CC G$-bimodules over the group algebra of $G$, considered as an algebra in the category of $\CC \Gamma$-comodules, that is, of $\Gamma$-graded vector spaces. Thus, an object of $\HM \Gamma..G.G.$ is a $\Gamma$-graded vector space $M\in\Vect_\Gamma$ with a two-sided $G$-action compatible with the grading in the sense that $|gmh|=g|m|h$ for $g,h\in G$ and $m\in M$.

The category $\HM \Gamma..G.G.$ is a fusion category. The tensor product is the tensor product of $\CC G$-bimodules. Simple objects are parametrized by irreducible representations of the stabilizers of right cosets of $G$ in $\Gamma$. More precisely, let $D\in G\lquot \Gamma/G$ be a double coset of $G$ in $\Gamma$, let $d\in D$, and let $S=\Stab_H(dG)=G\cap (d\adhit G)$ be the stabilizer in $G$ of the right coset $dG$ under the action of $G$ on its right cosets in $\Gamma$. Then the subcategory $\HM D..G.G.\subset \HM \Gamma..G.G.$ defined to contain those objects the degrees of all of whose homogeneous elements lie in $D$ is equivalent to the category $\Rep(S)$ of representations of $S$. The equivalence $\HM D..G.G.\to\Rep(S)$ takes $M$ to $(M_{dG})/G\cong(M/G)_{dG/G}$, the space of those vectors in the quotient of $M$ by the right action of $G$ whose degree lies in the right coset of $d$. Details are in \cite{Zhu:HARRHA,Sch:HAEMC}. We will denote the inverse equivalence by $\Indtobim_d\colon\Rep(S)\to\HM GdG..G.G.$. Explicitly, it can be described as follows: For $V\in\Rep(S)$ let $N:=\Ind_S^G=\CC G\ou{\CC S}V$ be the induced representation of $G$, which we endow with a $\Gamma/G$-grading by setting $|g\ot v|=gdG$; this grading is obviously compatible with the left action of $G$ so that (with a hopefully obvious definition) $N\in\HM \Gamma/G..G..$. Now we have
\begin{align}
  \label{eq:1}
  \Indtobim_d(V)&=N\co{\CC [\Gamma/G]}\CC G\\\notag&=\left\{\left.\sum_{\gamma\in\Gamma} n_\gamma\ot \gamma\in N\ot \CC G\right|n_\gamma\in N_{\gamma G}\right\}
    \\\notag&=\bigoplus_{\gamma}N_{\gamma G}\ot \CC \gamma G,
\end{align}
with the last sum running over a set of representatives of the right cosets of $G$ in $\Gamma$, the grading $|n\ot\gamma|=\gamma$, and the bimodule structure $g(n\ot \gamma)h=gn\ot g\gamma h$.

The functors $\Indtobim_d$ combine into a category equvialence
\begin{equation*}
  \bigoplus_d\Rep(\Stab_G(dG))\xrightarrow{(\Indtobim_d)_d}\HM \Gamma..G.G.
\end{equation*}
in which the sum runs over a set of representatives of the double cosets of $G$ in $\Gamma$.

The category $\lYD G=\lYD{\CC G}$ of (left-left) Yetter-Drinfeld modules over $\CC G$ has objects the $G$-graded vector spaces with a left $G$-action compatible with the grading in the sense that $|gv|=g|v|g\inv$ for $g\in G$ and $v\in V\in\lYD G$. The category $\lYD G$ is the (right) center of the category $\HM G....$ of $G$-graded vector spaces: The half-braiding $c\colon U\ot V\to V\ot U$ between a graded vector space $U$ and a Yetter-Drinfeld module $V$ is given by $u\ot v\mapsto |u|v\ot u$. 

 Simple objects of $\lYD G$ are parametrized by irreducible representations of the centralizers in $G$ of elements of $G$. (In fact this can be viewed as a special case of the description of graded bimodules above.). More precisely, let $g\in G$ and let $C_G(g)$ be the centralizer of $g$ in $G$. Then a functor 
 \begin{equation*}
   \IndtoYD_g\colon\Rep(C_G(g))\to \lYD G
 \end{equation*}
can be defined by sending $V\in\Rep(C_G(g))$ to the induced $\CC G$-module $\Ind_{C_G(g)}^GV=\CC G\ou{\CC C_G(g)}V$ endowed with the grading given by $|x\ot v|=xgx\inv$ for $x\in G$ and $v\in V$. In this way we obtain a category equivalence
\begin{equation*}
  \oplus_g\Rep(C_G(g))\xrightarrow{(\IndtoYD_g)_g}\lYD G.
\end{equation*}
The sum runs over a set of representatives of the conjugacy classes of $G$, and the image of the functor $\IndtoYD_g$ consists of those Yetter-Drinfeld modules the degrees of whose homogeneous elements lie in the conjugacy class of $g$. The images of the functors $\IndtoYD_g$ for conjugate elements coincide. More precisely, we have
\begin{equation*}
  \IndtoYD_{x\adhit g}(x\adhit W)\cong\IndtoYD_g(W)
\end{equation*}
for $W\in\Rep(C_G(g))$, with $x\adhit W\in \Rep(x\adhit C_G(g))=\Rep(C_G(x\adhit g))$.

The categories $\HM\Gamma..G.G.$ considered in this paper are examples of fusion categories \cite{MR2183279}. As we have already mentioned several times, we will be concerned with certain structure-preserving autoequivalences of such categories. To fix the notions, a monoidal functor or tensor functor $\mathcal F\colon\mathcal C\to\mathcal D$ consists of a functor $\mathcal F$ together with a natural isomorphism $\xi\colon \mathcal F(X)\otimes \mathcal F(Y)\to\mathcal F(X\ot Y)$ which is required to fulfill a compatibility condition with the associativity constraints of $\mathcal C$ and $\mathcal D$. If this compatibility condition is \emph{not} required, we shall speak of a quasitensor functor.

We will write $\langle M,N\rangle:=\dim_\CC (\Hom_{\C}(M,N))$ for objects $M,N$ in a semisimple category.

\section{An autoequivalence of graded bimodules}
\label{sec:an-auto-grad}

In this section we define an autoequivalence of the category of graded bimodules $\HM \Gamma..G.G.$ that generalizes the $r$-th Adams operator for $r$ prime to the exponent of $G$ in case $\Gamma=G$. (Note that $\HM G..G.G.\cong\Rep(G)$ as tensor categories.)

Let $\Gamma$ be a finite group and $G\subset \Gamma$ a subgroup. Let $e=\exp(G)$, and $r$ an integer relatively prime to $e$. Since $r$ is also relatively prime to the exponent of all the stabilizer subgroups of $G$ that occur in the structure description of $\HM \Gamma..G.G.$ given above, the characters of all those stabilizer subgroups take values in a cyclotomic field of order prime to $r$. Thus, we can apply the automorphism $\adams_r$ to those characters.

\begin{Def}\label{def:sigmafunctor}
  Let $\Gamma$ be a finite group, $G\subset \Gamma$ a subgroup, and $e=\exp(G)$. Let $r$ be an integer relatively prime to $e$. Then we define an autoequivalence $\auta_r$ of $\HM \Gamma..G.G.$ by
  \begin{equation}
    \label{eq:2}
   \auta_r\left(\Indtobim_d(\eta)\right):=\Indtobim_d(\adams_r(\eta)).
  \end{equation}
  for $d\in \Gamma$.
\end{Def}

\begin{Lem}\label{Lem:arith-bases}
  One can choose bases $B=(b_i)$ for each $M\in\HM \Gamma..G.G.$ consisting of $\Gamma$-homogeneous elements such that, if $L_i^j(M,g),R_i^j(M,g)\in \C$ for $g\in G$ are defined by $gb_i=\sum L_i^j(M,g)b_j$ and $b_ig=\sum R_i^j(M,g)b_j$, then
  \begin{enumerate}
  \item $L_i^j(M,g)\in \Q(\zeta_e)$.
  \item $(R_i^j(M,g))_{i,j}$ is a permutation matrix.
  \item $L_i^j(\auta_r(M),g)=\galaut_r(L_i^j(M,g))$.
  \item $R_i^j(\auta_r(M),g)=R_i^j(M,g)$.
  \end{enumerate}
\end{Lem}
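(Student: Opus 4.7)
The plan is to read off the required bases directly from the explicit formula \eqref{eq:1} for $\Indtobim_d$. By additivity of the equivalence $(\Indtobim_d)_d$, it suffices to treat $M=\Indtobim_d(V)$ for a single double-coset representative $d\in\Gamma$ and a representation $V\in\Rep(S)$, where $S=\Stab_G(dG)=G\cap(d\adhit G)$. The crucial preliminary step is to realize $V$ over $\Q(\zeta_e)$: since $\exp(S)$ divides $e$, Brauer's theorem (as invoked in \cref{sec:preliminaries}) furnishes a basis $(v_\beta)$ of $V$ in which all representing matrices $(\rho_{\beta\delta}(s))_{\beta,\delta}$, $s\in S$, have entries in $\Q(\zeta_e)$.

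Fix a set $(x_\alpha)$ of left coset representatives for $S$ in $G$; then $(x_\alpha\ot v_\beta)$ is a basis of $N:=\Ind_S^G V$, homogeneous for the $\Gamma/G$-grading with $|x_\alpha\ot v_\beta|=x_\alpha dG$. By \eqref{eq:1}, a $\Gamma$-homogeneous basis of $\Indtobim_d(V)$ is therefore
\[
 b_{\alpha,\beta,h}:=(x_\alpha\ot v_\beta)\ot x_\alpha d h,\qquad h\in G,
\]
of degree $x_\alpha dh$. From the bimodule structure of \eqref{eq:1}, the right action satisfies $b_{\alpha,\beta,h}\cdot h'=b_{\alpha,\beta,hh'}$, a manifest permutation of the basis; this gives (2). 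For the left action of $g\in G$, write $gx_\alpha=x_{\alpha'}s$ with $s\in S$, and set $t:=d\inv sd\in G$ (well-defined because $s\in d\adhit G$). Using $x_{\alpha'}s\ot v_\beta=x_{\alpha'}\ot sv_\beta$ in $N$ and $sdh=dth$ in $\Gamma$, a short computation yields
\[
 g\cdot b_{\alpha,\beta,h}=\sum_\delta \rho_{\beta\delta}(s)\,b_{\alpha',\delta,th},
\]
whose coefficients lie in $\Q(\zeta_e)$ by choice of basis of $V$; this gives (1).

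For (3) and (4) I apply the identical construction to $\auta_r(M)=\Indtobim_d(\adams_r(V))$. The underlying vector space of $\adams_r(V)$ is $V$ itself and can be equipped with the same basis $(v_\beta)$; only the matrices of the $S$-action are replaced by their entry-wise $\galaut_r$-conjugates. The right-action computation above never mentions $\rho$, so the permutation is identical for $M$ and $\auta_r(M)$, giving (4); the left-action computation goes through verbatim with each $\rho_{\beta\delta}(s)$ replaced by $\galaut_r(\rho_{\beta\delta}(s))$, giving (3). There is no serious obstacle in this argument beyond the initial appeal to Brauer's theorem to guarantee a $\Q(\zeta_e)$-rational basis of $V$; the remainder is bookkeeping on the canonical basis of an induced bimodule.
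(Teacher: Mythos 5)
Your proof is correct and follows essentially the same route as the paper's: reduce to a single $\Indtobim_d(V)$, invoke Brauer's theorem to realize $V$ (and, on the same underlying space, $\adams_r(V)$) over $\Q(\zeta_e)$, and read off the left and right action matrices on the canonical homogeneous basis coming from \eqref{eq:1}. The only difference is that you write out explicitly the computation ($gx_\alpha=x_{\alpha'}s$, $t=d\inv sd$) that the paper summarizes as ``easy to check,'' which is a welcome addition rather than a deviation.
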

\begin{proof}
  It suffices to pick such bases for each $\Indtobim_d(W)$, where $d$ runs through a system of representatives for the double cosets of $G$ in $\Gamma$, and $W$ runs through the irreducible representations of $S=\Stab_G(dG)$.

  By Brauer's Theorem, we can pick a basis $(b_\alpha')$ for such a representation $W\in\Rep(S)$ such that the corresponding matrix representation of $S$ has coefficients in $\Q(\zeta_e)$. The representation $\adams_r(W)$ can then be realized on the same underlying vector space by Galois conjugating those matrix coefficients. We will choose to realize all the representations in the same $\adams_r$-orbit in this way. For the induced representation $N=\Ind_S^G(W)=\CC G\ou{\CC S}W$ we choose the basis $b''_{c,\alpha}=c\ot b'_\alpha$ indexed by the index set of the basis $(b'_\alpha)$ along with a set of representatives of $G/S$. Note that $b''_{c,\alpha}$ has $\Gamma/G$-degree $cdG$. It is easy to check that the matrix of the left action of $g\in G$ on $\Ind_S^G(W)$ has coefficients in $\Q(\zeta_e)$, and that the $\galaut_r$-conjugate matrix describes the action on $\Ind_S^G(\adams_r(W))$. Let $\tilde N=N\ot \CC \Gamma$, with the diagonal left $G$-action and the right $G$-action on $\CC \Gamma$. Then $\tilde N$ has basis $(b''_{c,\alpha}\ot\gamma)$, indexed by $c,\alpha$ as above and $\gamma\in G$; with respect to this basis, the right action of $G$ is by permutation matrices, and the left action is by the Kronecker products of the left action on $N$ and permutation matrices. According to  \eqref{eq:1}, a subset of the chosen basis of $\tilde N$ gives a basis of $\Indtobim_d(W)\in\HM \Gamma..G.G.$, and thus the matrices of the left and right action of $G$ on the latter behave as claimed.
\end{proof}

\begin{Prop}
  Let $\Gamma$ be a finite group, $G\subset \Gamma$ a subgroup, $e=\exp(G)$ and $r$ relatively prime to $e$.
  \begin{enumerate}
  \item The functor $\auta_r$ from \cref{def:sigmafunctor} is a quasitensor autoequivalence.
  \item We have $\nu_m(\auta_r(M))=\galaut_r(\nu_m(M))$ for each $M\in\HM \Gamma..G.G.$.
  \end{enumerate}
\end{Prop}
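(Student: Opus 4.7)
The plan is to reduce both parts to the concrete realization of $\auta_r$ furnished by \cref{Lem:arith-bases}: in the arithmetic bases of that lemma, $\auta_r(M)$ shares the underlying $\Gamma$-graded vector space and the right $G$-action of $M$, while the matrix entries of the left $G$-action are Galois-conjugated by $\galaut_r$. Both the tensor product in $\HM\Gamma..G.G.$ and the higher Frobenius-Schur indicators can be read off from these matrix entries via $\Q$-rational expressions, and $\galaut_r$ commutes with any such expression.

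For part~(1), fix $M, N \in \HM\Gamma..G.G.$ with arithmetic bases $(b_i)$ and $(b'_j)$. Since every $R(M,g)$ is a permutation matrix, $M$ decomposes as a right $\CC G$-module into a direct sum of coset modules $\CC[S_{i_0} \lquot G]$, where $S_{i_0}$ is the stabilizer of a chosen representative $b_{i_0}$ of each right orbit. This yields a vector space isomorphism $M \ou{\CC G} N \cong \bigoplus_{i_0} N_{S_{i_0}}$, with $N_{S_{i_0}}$ the $S_{i_0}$-coinvariants of $N$ for the left action. Choosing $\Q(\zeta_e)$-bases of each $N_{S_{i_0}}$ lifted from the $b'_j$ (possible because the restricted left action of $S_{i_0}$ on $N$ has matrix entries in $\Q(\zeta_e)$), one obtains a basis of $M \ou{\CC G} N$ whose $L$- and $R$-matrices are $\Q$-rational expressions in those of $M$ and $N$. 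The combinatorics underlying this construction (orbits, stabilizers, permutations) is encoded entirely in the $R$-matrices, which $\auta_r$ does not change; applying the same procedure to $\auta_r(M)$ and $\auta_r(N)$ therefore produces structure constants that are literally the $\galaut_r$-conjugates of those for $M \ou{\CC G} N$. By \cref{Lem:arith-bases} these describe precisely $\auta_r(M \ou{\CC G} N)$, yielding a natural isomorphism $\auta_r(M) \ou{\CC G} \auta_r(N) \cong \auta_r(M \ou{\CC G} N)$.

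For part~(2), invoke the formula for higher Frobenius-Schur indicators in $\HM\Gamma..G.G.$ established in \cite{Sch:CHFSIFCCIFG}. That formula presents $\nu_m(M)$ as a normalized sum, indexed by finitely many tuples of group elements, of traces of products of left- and right-action matrices of $G$ on $M$; in particular, it is a $\Q$-rational expression in the matrix entries. By \cref{Lem:arith-bases}, the $L$-entries are $\galaut_r$-conjugated upon replacing $M$ with $\auta_r(M)$, while the $R$-entries (being $0$ or $1$) are preserved. Since $\galaut_r$ fixes $\Q$ and distributes over the formula, we conclude $\nu_m(\auta_r(M)) = \galaut_r(\nu_m(M))$.

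The principal obstacle lies in part~(1): one must verify carefully that the chosen basis of $M \ou{\CC G} N$ is arithmetic in the sense of \cref{Lem:arith-bases} and that its $L$- and $R$-matrices are $\Q$-polynomial in those of $M$ and $N$ in a manner compatible with $\galaut_r$. This is essentially linear algebra over $\Q(\zeta_e)$ controlled by the permutation combinatorics of the $R$-matrices. Part~(2) is then a formal consequence of the $\Q$-rationality of the indicator formula.
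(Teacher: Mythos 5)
Your part~(2) is the paper's argument: the indicator formula of \cite{Sch:CHFSIFCCIFG} is manifestly equivariant under $\galaut_r$ applied to the data describing $M$, and the paper simply declares the claim obvious from that formula. For part~(1), however, you take a genuinely different and considerably heavier route. The paper never constructs an isomorphism $\auta_r(M)\ou{\CC G}\auta_r(N)\cong\auta_r(M\ou{\CC G}N)$; by semisimplicity it suffices to check that the multiplicities $\langle M\ou{\CC G}N,P\rangle$ for simples $M,N,P$ are unchanged, and this is done by observing that a balanced $G$-$G$-morphism $f$ is a solution of a linear system whose coefficients are exactly the $L$- and $R$-entries of \cref{Lem:arith-bases}; entrywise application of $\galaut_r$ is a semilinear bijection from the solution space for $(M,N,P)$ onto that for the conjugated triple, and since both systems are defined over $\Q(\zeta_e)$ the complex dimensions of the Hom-spaces coincide. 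This sidesteps all the basis bookkeeping your construction requires.

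Your explicit construction can probably be pushed through, but as written it has a real gap at its final step. First, a smaller inaccuracy: the passage to coinvariants $M\ou{\CC G}N\cong\bigoplus_{i_0}N_{S_{i_0}}$ is \emph{not} ``encoded entirely in the $R$-matrices.'' A basis of $N_{S_{i_0}}$ is obtained by imposing the relations $\overline{sn}=\overline{n}$ for $s\in S_{i_0}$, which involve the left-action matrices of $N$ restricted to $S_{i_0}$; these are not permutation matrices, so the induced right-action matrices on your basis of $M\ou{\CC G}N$ will in general only be $\Q(\zeta_e)$-rational, not permutations, and in particular your basis is not one of the bases produced by \cref{Lem:arith-bases} (it does not arise from a decomposition into simples $\Indtobim_d(W)$). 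This leads to the genuine gap: the assertion that the conjugated structure constants ``by \cref{Lem:arith-bases} describe precisely $\auta_r(M\ou{\CC G}N)$'' does not follow, because that lemma is only an \emph{existence} statement for certain special bases. What you actually need is a converse: if an object $Q$ has a homogeneous basis in which the action matrices are the entrywise $\galaut_r$-conjugates of $\Q(\zeta_e)$-rational action matrices of $P$ in some homogeneous basis, then $Q\cong\auta_r(P)$. This is true, but requires a descent argument (two $\Q(\zeta_e)$-forms of the same object are already isomorphic over $\Q(\zeta_e)$, so one may pass by a rational change of basis to an arithmetic basis of \cref{Lem:arith-bases} and conjugate that isomorphism). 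Until you supply this supplementary lemma, the identification of $\auta_r(M)\ou{\CC G}\auta_r(N)$ with $\auta_r(M\ou{\CC G}N)$ is not justified.
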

\begin{proof}
  The second claim is obvious from the indicator formula obtained in \cite{Sch:CHFSIFCCIFG}. For the first claim we have to check that the multiplicity
  \begin{equation}\label{eq:3}
    \langle M\ou{\CC G}N,P\rangle=\dim\Hom_{G-G}^\Gamma(M\ou{\CC G}N,P)
  \end{equation}
  for three simple objects $M,N,P\in\HM \Gamma..G.G.$ does not change upon application of $\auta_r$ to all three objects. We choose bases $(b_i)$ for $M$, $(c_i)$ for $N$ and $d_i$ for $P$ for the three objects as in \cref{Lem:arith-bases}. The morphisms we ``count'' in \eqref{eq:3} are described by $\CC $-linear maps $f\colon M\ot N\to P$ subject to the conditions $gf(m\ot n)=f(gm\ot n)$, $f(mg\ot n)=f(m\ot gn)$ and $f(m\ot ng)=f(m\ot n)g$ for all $g\in G$, where it suffices to let $m,n$ run through bases of $M$ and $N$. Write $f(b_i\ot c_j)=\sum \phi_{ij}^kd_k$ for $\phi_{ij}^k\in \CC $. Then the three conditions on $f$ translate to relations between the matrix elements $\phi_{ij}^k$ and the matrix elements for the left and right $G$-actions from \cref{Lem:arith-bases}, to wit:
  \begin{align*}
    \sum L_i^k(M,g) \phi_{kj}^\ell&=\sum \phi_{ij}^mL_m^\ell(P,g)\\
    \sum R_i^k(M,g)\phi_{kj}^\ell&=\sum \phi_{ik}^\ell L_j^k(N,g)\\
    \sum R_j^k(N,g)\phi_{ik}^\ell&=\sum\phi_{ij}^kR_k^\ell(P,g)
  \end{align*}
  where the respective sums run over the index occurring twice. When we apply the functor $\auta_r$ to the three objects $M,N,P$, the matrices of the left actions are conjugated by $\galaut_r$, as are the (integer) matrices of the right actions. Thus, conjugating the matrix $\phi$ of $f$ by $\galaut_r$ as well yields a homomorphism between the conjugated objects, whence the claim.
\end{proof}

\section{An autoequivalence of Yetter-Drinfeld modules}
\label{sec:an-auto-yett}

Let $G$ be a finite group, and $r$ an integer relatively prime to $\exp(G)$. We consider the following autoequivalence $\autb_r$ of the category $\lYD G$ of Yetter-Drinfeld modules: For $M\in\lYD G$ let $\autb_r(M)=M$ as left $\CC G$-module, but modify the grading by setting $\autb_r(M)_{g^r}=M_g$. We will denote by $\autb_r(m)$ the element $m\in M$ considered as an element of $\autb_r(M)$, so that $|\autb_r(m)|=|m|^r$ if $m$ is homogeneous. The fact that $\autb_r(M)$ is a Yetter-Drinfeld module is a trivial consequence of the power map's being equivariant with respect to the adjoint action. $\autb_r$ is an autoequivalence since $r$ is relatively prime to $\exp(G)$, the inverse being $\autb_s$ when $rs\equiv 1\mod \exp(G)$.

We will denote by $(X_i)_{i\in I}$ a representative set for the isomorphism classes of simples of $\lYD G$, and define $\autb_r(i)$ by $\autb_r(X_i)=X_{\autb_r(i)}$ for $i\in I$.

We need to examine the behavior of the modular structure of the category $\lYD G$ under the functor $\autb_r$. This is rather simple for the $T$-matrix or ribbon structure $\theta$ of the category: The map $\theta_M$ maps $m$ of degree $g$ to $gm$ of the same degree. Therefore, $\theta_{\autb_r(M)}=(\theta_M)^r$. If $M=X_i$ is simple, so $\theta_M=\omega_i\id_M$ for a root of unity $\omega_i$, we obtain $\omega_{\autb_r(i)}=(\omega_i)^r=\galaut_r(\omega_i)$.

We proceed to investigate the $S$-matrix, denoting by $\braid$ the braiding in the braided monoidal category $\lYD G$. Let $M,N\in\lYD G$. Then for $m\in M_g$ and $n\in N_h$ we have
\begin{align*}
  \braid^2(m\ot n)&=\braid(gn\ot m)\\
  &=|gn|m\ot gn\\
  &=ghg\inv m\ot gn.
\end{align*}
We can endow $M\ot N$ with a $G\times G$-grading composed of the $G$-gradings of $M$ and $N$. Then
\begin{align*}
  \deg_{G\times G}\braid^2(m\ot n)&=(ghg\inv g(ghg\inv)\inv,ghg\inv)\\
  &=(ghg^2h\inv g\inv,ghg\inv).
\end{align*}

For a finite group $\Gamma$, a $\Gamma$-graded vector space $V$, and an endomorphism $f$ of $V$ write $f_0$ for the trivial component of $f$ with respect to the $\Gamma$-grading of $\End(V)$. Then $\Tr(f)=\Tr(f_0)$.

In our example, considering the $G\times G$-grading of $M\ot N$, we see that $\braid^2(m\ot n)$ has the same degree as $m\ot n$ if and only if $g$ and $h$ commute; thus $(\braid^2)_0(m\ot n)=0$ unless $(g,h)=1$, and $(\braid^2)_0(m\ot n)=hm\ot gn$ if $(g,h)=1$.

The last equation implies also that $(\braid^2)_0$ is of finite order dividing $\exp(G)$, and thus, for $r$ relatively prime to $\exp(G)$, we have
$\tr\left((\braid^2)_0^r\right)=\galaut_r\left(\tr\left((\braid^2)_0\right)\right)$. On the other hand, $(\braid^2_{\autb_r(M),\autb_r(N)})_0=\left(\braid^2_{MN}\right)_0^r$ (note that $gh=hg$ if and only if $g^rh^r=h^rg^r$). Taking these results together, we obtain
$\tr(\braid^2_{\autb_rM,\autb_rN})=\galaut_r(\tr(\braid^2_{MN}))$.

\begin{Thm}
  Let $G$ be a finite group, and $r$ an integer relatively prime to $\exp(G)$.

  Then $\autb_r\colon\lYD G\to\lYD G$ is a quasi-tensor autoequivalence of $\lYD G$.
  
  We have $S_{\autb_r(i),\autb_r(j)}=\galaut_r(S_{ij})$ for simples $i,j$ of $\lYD G$, and $\nu_m(\autb_r(V))=\galaut_r(\nu_m(V))=\nu_{mr}(V)$ for $m\in\Z$.
\end{Thm}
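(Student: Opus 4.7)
The plan is to leverage the two transformation formulas established in the paragraphs immediately preceding the theorem: for simples $X_i$,
\[
  \theta_{\autb_r(X_i)} = \theta_{X_i}^r = \galaut_r(\theta_{X_i}),
\]
and for arbitrary $M, N \in \lYD G$,
\[
  \tr(\braid^2_{\autb_r M, \autb_r N}) = \galaut_r(\tr(\braid^2_{MN})).
\]
The $S$-matrix claim then follows at once from the second formula, since $S_{ij} = \tr(\braid^2_{X_i,X_j})$. The remaining quasi-tensor and indicator claims will be extracted from the Verlinde formula and the Bantay-type indicator formula of \cite{Sch:CHFSIFCCIFG}, both valid in the modular tensor category $\lYD G$.

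For the quasi-tensor property I would apply the Verlinde formula, which expresses the fusion multiplicity $N_{ij}^k$ as a rational expression in the $S$-entries (and their complex conjugates) divided by $S_{0\ell}$. Substituting $(\autb_r(i), \autb_r(j), \autb_r(k))$ and re-indexing the summation via $\ell \mapsto \autb_r(\ell)$, each $S$-factor becomes the $\galaut_r$-image of an $S$-factor of the original formula; the unit object is fixed by $\autb_r$ since $1 = 1^r$, so $S_{0, \autb_r(\ell)} = \galaut_r(S_{0\ell})$ fits the same pattern. Since $\galaut_r$ commutes with complex conjugation (as $\overline{\zeta_e} = \zeta_e^{-1}$ is sent to $\zeta_e^{-r} = \overline{\zeta_e^r}$), the right hand side equals $\galaut_r(N_{ij}^k) = N_{ij}^k$, the last equality holding because fusion multiplicities are non-negative integers.

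For the indicator identities $\nu_m(\autb_r V) = \galaut_r(\nu_m(V)) = \nu_{mr}(V)$ I would appeal to the formula of \cite{Sch:CHFSIFCCIFG}, which expresses $\nu_m(V)$ as a rational combination of $m$-th powers of the twists $\theta_i$ and of $S$-matrix entries, weighted by integer fusion multiplicities and by integer dimensions. Replacing $V$ by $\autb_r V$ permutes the summation index and, via the two identities already established, replaces every $S$-factor and every $\theta_i^m$ by its $\galaut_r$-image without affecting the integer data; this yields the first equality. Replacing $m$ by $mr$ instead acts directly on the twist factors via $\theta_i^{mr} = \galaut_r(\theta_i^m)$ without touching the $S$- or fusion-data, yielding the second equality.

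The main obstacle is the careful bookkeeping in this last step: one must verify that every ingredient of the Bantay-type sum either lies in the fixed field of $\galaut_r$ or transforms by exactly one application of $\galaut_r$ under each of the two substitutions $V \mapsto \autb_r V$ and $m \mapsto mr$, so that both produce the same Galois conjugate of $\nu_m(V)$. It suffices to verify the indicator identities on simple $V$; linearity of $\nu_m$ then extends them to arbitrary $V \in \lYD G$.
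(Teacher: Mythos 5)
Your proposal follows essentially the same route as the paper: the $S$- and $T$-matrix transformation formulas established just before the theorem, the Verlinde formula to deduce $N_{\autb_r(i),\autb_r(j)}^{\autb_r(k)}=\galaut_r(N_{ij}^k)=N_{ij}^k$ from integrality, and the Bantay-type formula for higher indicators in a modular category to get the indicator identities. The only slip is the citation: the relevant indicator formula in terms of the $S$- and $T$-data is the one from Ng--Schauenburg (Thm.~7.5 of that paper), not the counting formula of \cite{Sch:CHFSIFCCIFG}, which is the one used for graded bimodules in the earlier section.
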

\begin{proof}
  We have already proved that $\autb_r$ preserves the $S$-matrix up to Galois conjugation.

  By the Verlinde formula (see \cite[Thm.~3.1.13]{BakKir:LTCMF}), this implies that the multiplicities $N_{ij}^k$ of a simple $X_k$ in $X_i\ot X_j$ are also preserved up to Galois conjugation, that is, $N_{\autb_r(i),\autb_r(j)}^{\autb_r(k)}=\galaut_r(N_{ij}^k)$. But the multiplicities are integers, and thus $N_{\autb_r(i),\autb_r(j)}^{\autb_r(k)}=N_{ij}^k$.

Now $\autb_r$ also preserves the $T$-matrix up to Galois conjugation. Thus the formula from \cite[Thm.~7.5]{NgSch:FSIESC}, generalizing Bantay's formula for degree two Frobenius-Schur indicators to higher indicators, implies that Frobenius-Schur indicators behave under $\autb_r$ as claimed above.
  \end{proof}

\begin{Rem}
  Iovanov, Mason, and Montgomery \cite{2012arXiv1208.4153I} have found examples of finite groups $G$ such that not all the higher Frobenius-Schur indicators of simple modules of the Drinfeld double $D(G)$ are integers. For such $G$ (the simplest example in \cite{2012arXiv1208.4153I} has order $5^6$), the autoequivalences $\autb_r$ do not preserve Frobenius-Schur indicators, and thus they cannot be endowed with the structure of a \emph{monoidal} autoequivalence of $\lYD G$.
\end{Rem}

It remains to observe how the functors $\autb_r$ behave with respect to the ``local'' description of simples of $\lYD G$ in terms of representations of centralizers. The proof of the following result should be rather obvious:
\begin{Lem}
  Let $G$ be a finite group, and $r$ an integer relatively prime to $\exp(G)$. For $g\in G$ and $V\in\Rep(C_G(g))$ we have
  \begin{equation}
    \label{eq:4}
    \autb_r\left(\IndtoYD_g\left(V\right)\right)=\IndtoYD_{g^r}\left(V\right)
  \end{equation}
\end{Lem}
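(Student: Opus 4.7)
The plan is essentially a direct unwinding of the definitions, preceded by one small observation about centralizers. First I would note that, since $r$ is relatively prime to $\exp(G)$ and therefore to the order of $g$, there exists $s\in\Z$ with $g^{rs}=g$, so $\langle g\rangle=\langle g^r\rangle$ and hence $C_G(g)=C_G(g^r)$. In particular, a representation $V$ of $C_G(g)$ is literally the same as a representation of $C_G(g^r)$, and both $\IndtoYD_g(V)$ and $\IndtoYD_{g^r}(V)$ are defined on the same underlying induced $\CC G$-module $\CC G\ou{\CC C_G(g)}V$ with the same left $G$-action by left multiplication.

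It then remains to check that the $G$-gradings agree. By the construction of $\IndtoYD_g$, the element $x\ot v\in\IndtoYD_g(V)$ is homogeneous of degree $xgx\inv$. By the definition of $\autb_r$, this element, viewed in $\autb_r(\IndtoYD_g(V))$, has degree $(xgx\inv)^r=xg^rx\inv$, which is exactly its degree in $\IndtoYD_{g^r}(V)$. Thus the identity map on the underlying $\CC G$-module is an isomorphism of Yetter--Drinfeld modules $\autb_r(\IndtoYD_g(V))\cong \IndtoYD_{g^r}(V)$. There is no real obstacle here; the only point that could conceivably trip one up is forgetting that $C_G(g)=C_G(g^r)$ needs the coprimality hypothesis, which is precisely what makes the statement well-posed.
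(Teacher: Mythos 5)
Your proof is correct and is precisely the routine verification the paper has in mind: the paper in fact omits the argument entirely, declaring it ``rather obvious,'' and your unwinding of the definitions (the equality $C_G(g)=C_G(g^r)$ from coprimality, the shared underlying induced module, and $(xgx\inv)^r=xg^rx\inv$ for the gradings) is exactly the intended content.
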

In other words, simple Yetter-Drinfeld modules are parametrized by an element $g$ of $G$ and an irreducible representation of its centralizer. Applying $\autb_r$ amounts to taking the same representation, but regarding it as belonging to $g^r$ (which has the same centralizer) instead of $g$.

Typically, doing explicit calculations with the category $\lYD G$, one would choose a system $\mathfrak C$ of representatives for the conjugacy classes in $G$, so that every simple is of the form $\IndtoYD_g(V)$ for a \emph{unique} $g\in \mathfrak C$ and $V\in\Irr(C_G(g))$. The preceding lemma, by contrast, does not follow this partition of the simples, since $g\in\mathfrak C$ does not imply $g^r\in\mathfrak C$.
\begin{Rem}
  Let $G$ be a finite group, $\mathfrak C$ a system of representatives of the conjugacy classes of $G$, and $r$ an integer relatively prime to $\exp(G)$. For $g\in\mathfrak C$ choose $x\in G$ such that $x\adhit g^r\in\mathfrak C$. Then for $V\in\Rep(C_G(g))$ we have
  \begin{equation}
    \label{eq:5}
    \autb_r(\IndtoYD_g(V))=\IndtoYD_{x\adhit g^r}(x\adhit V).
  \end{equation}
\end{Rem}

\section{An autoequivalence preserving indicators}
\label{sec:an-auto-pres}

In the previous section we have constructed a quasitensor autoequivalence of $\lYD G$ preserving higher Frobenius-Schur indicators up to Galois conjugation. Another such equivalence is given by the results in \cref{sec:an-auto-grad}, since the category of Yetter-Drinfeld modules can be viewed as a special case of the category of graded bimodules. Combining the two (rather, one of them with the inverse of the other) we obtain an quasitensor autoequivalence of $\lYD G$ that preserves higher Frobenius-Schur indicators. It is still not a \emph{monoidal} equivalence by recent results of Davydov.

\begin{Def}
  Let $G$ be a finite group and $r$ an integer relatively prime to the exponent $e=\exp(G)$. Define an autoequivalence $\doubleadams_r$ of $\lYD G$ by $\doubleadams_r=\autb_s\circ\auta_r$, where $sr\equiv 1\mod e$.

  Thus, $\doubleadams_r\left(\IndtoYD_{g^r}(V)\right)=\IndtoYD_g(\adams_r(V))$.
\end{Def}
\begin{Cor}
  $\doubleadams_r$ is a quasitensor autoequivalence of $\lYD G$ and satisfies $\nu_m(\doubleadams_r(V))=\nu_m(V)$ for all $V\in\lYD G$.
\end{Cor}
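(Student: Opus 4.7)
The plan is to derive this corollary essentially formally from the two results established in Sections 3 and 4, since $\doubleadams_r$ has been defined as a composition $\autb_s\circ\auta_r$ of two autoequivalences whose relevant properties are already in hand. So the task is really just to verify that each property (quasitensor, Galois behavior of indicators) composes correctly.

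First I would observe that a composition of quasitensor functors is again quasitensor: given natural isomorphisms $\xi^{\auta}\colon \auta_r(X)\otimes\auta_r(Y)\to\auta_r(X\otimes Y)$ and $\xi^{\autb}\colon \autb_s(X)\otimes\autb_s(Y)\to\autb_s(X\otimes Y)$, one builds $\xi^{\doubleadams}$ by composing $\xi^{\autb}_{\auta_r(X),\auta_r(Y)}$ with $\autb_s(\xi^{\auta}_{X,Y})$, and since no coherence axiom is imposed on a quasitensor functor there is nothing further to check. Both $\auta_r$ and $\autb_s$ are autoequivalences (the latter because $r$ prime to $e$ forces $s$ prime to $e$ as well), so their composite is one too. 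Here I am implicitly using the equivalence $\lYD G\cong\HM{G\times G}..G.G.$ (via $G$ embedded diagonally) to view $\auta_r$ as acting on $\lYD G$.

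Next I would compute the effect on indicators, simply by chaining the two formulas. By the proposition in Section~3, $\nu_m(\auta_r(V))=\galaut_r(\nu_m(V))$, and by the theorem in Section~4, $\nu_m(\autb_s(W))=\galaut_s(\nu_m(W))$. Hence
\begin{equation*}
\nu_m(\doubleadams_r(V))=\nu_m(\autb_s(\auta_r(V)))=\galaut_s(\nu_m(\auta_r(V)))=\galaut_s\galaut_r(\nu_m(V))=\galaut_{sr}(\nu_m(V))=\nu_m(V),
\end{equation*}
the final equality because $sr\equiv 1\pmod{e}$ makes $\galaut_{sr}$ act as the identity on $\Q(\zeta_e)$, while each indicator $\nu_m(V)$ lies in this cyclotomic field.

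The explicit formula $\doubleadams_r(\IndtoYD_{g^r}(V))=\IndtoYD_g(\adams_r(V))$ is then a quick consequence: Definition~3.1 (transported to the Yetter-Drinfeld side) gives $\auta_r(\IndtoYD_{g^r}(V))=\IndtoYD_{g^r}(\adams_r(V))$, and the lemma at the end of Section~4 yields $\autb_s(\IndtoYD_{g^r}(\adams_r(V)))=\IndtoYD_{(g^r)^s}(\adams_r(V))=\IndtoYD_g(\adams_r(V))$ since $rs\equiv 1\pmod{\exp(G)}$ implies $g^{rs}=g$. There is no serious obstacle here; the only point requiring any vigilance is keeping track of the identification between the Yetter-Drinfeld induction functors $\IndtoYD_g$ and the bimodule induction functors $\Indtobim_d$ on which $\auta_r$ was originally defined, but this is straightforward under the standard equivalence $\lYD G\cong\HM{G\times G}..G.G.$.
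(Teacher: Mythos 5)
Your proposal is correct and is essentially the argument the paper intends: the corollary is stated without proof precisely because it follows formally from the Proposition of Section~3 and the Theorem of Section~4 by composing the two quasitensor autoequivalences and chaining $\galaut_s\circ\galaut_r=\galaut_{sr}=\id$ on $\Q(\zeta_e)$, where the indicators live. Your verification of the explicit formula $\doubleadams_r(\IndtoYD_{g^r}(V))=\IndtoYD_g(\adams_r(V))$ likewise matches what the paper folds into the definition.
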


\begin{Rem}
  $\doubleadams_r$ cannot, in general, be endowed with the structure of a monoidal autoequivalence of $\lYD G$. In fact the restriction of $\doubleadams_r$ to the monoidal subcategory $\Rep(G)\subset\lYD G$ of those Yetter-Drinfeld modules whose grading is trivial is the usual Adams operator. There are examples where these Adams operators with $r\equiv{-1}\mod e$ do not arise from a monoidal autoequivalence. The following argument is due to Alexei Davydov: First, for a simple group $G$, any autoequivalence of $\Rep(G)$ must arise from a group automorphism of $G$. In fact, autoequivalences of $\Rep(G)$ are classified by $G$-Bi-Galois algebras \cite{Sch:HBE,Dav:GAMFBCRFG}. But if $G$ is simple, then by \cite[Thm.3.8 and Prop.6.1]{Dav:GAMFBCRFG}  such $G$-Bi-Galois algebras are trivial as one-sided $G$-Galois algebras, and thus they are described by group automorphisms (cf.\ \cite[Lem.3.11]{Sch:HBE}). Now for an automorphism of $G$ to give rise to the Adams operator $\adams_{-1}$, it has to be a class-inverting automorphism (see \cite{2014arXiv1412.8505D}), and there exist simple groups where such automorphisms do not exist (\cite{2014arXiv1412.8505D} gives the Mathieu group $M_{11}$.)  
\end{Rem}

\begin{Rem}
  Alexei Davydov pointed out that composing $\auta_r$ with $\autb_r$ (instead of using the inverse autoequivalence of one of the two) yields the Galois symmetry of $\lYD GG$. This can be read off from the description of the latter in \cite[p.689]{MR1770077}. Thus the two quasitensor autoequivalences can be viewed as decomposing the Galois symmetry (defined for any modular fusion category) in two factors, in the particular case of the Drinfeld double of a finite group.
\end{Rem}

\begin{Cor}\label{Cor:counting-formula}
  Let $G$ be a finite group, $r$ an integer prime to the exponent of $G$, $m\in\Z$, and $g,y\in G$. Then
  \begin{equation}
    \label{eq:6}
    |\{x\in G|x^m=(gx)^m=y\}|=|\{x\in G|x^{m}=(g^rx)^{m}=y^r\}|.
  \end{equation}
\end{Cor}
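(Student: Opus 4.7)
The plan is to combine two ingredients that are essentially already at hand. First, the preceding Corollary states $\nu_m(\doubleadams_r(X))=\nu_m(X)$ for every $X\in\lYD G$. Second, the formula from \cite{KasSomZhu:HFSI} expresses $\nu_m(\IndtoYD_gV)$ as a sum over exactly the kind of group-theoretic data appearing in \eqref{eq:6}. Combining the two and then applying character orthogonality will extract \eqref{eq:6}.

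Since $r$ is coprime to $\exp(G)$, the elements $g$ and $g^r$ generate the same cyclic subgroup, so $C_G(g)=C_G(g^r)$ and $\Irr(C_G(g))=\Irr(C_G(g^r))$. For $V\in\Irr(C_G(g))$, specializing $\nu_m(\doubleadams_r(X))=\nu_m(X)$ to $X=\IndtoYD_{g^r}(V)$ and using $\doubleadams_r(\IndtoYD_{g^r}V)=\IndtoYD_g(\adams_rV)$ gives
\[\nu_m(\IndtoYD_g(\adams_rV))=\nu_m(\IndtoYD_{g^r}(V)).\]
Substituting the KSZ formula in the form
\[\nu_m(\IndtoYD_gV)=\frac{1}{|C_G(g)|}\sum_{\substack{x\in G \\ x^m=(gx)^m}}\chi_V(x^m)\]
and using $(\adams_r\chi_V)(z)=\chi_V(z^r)$ rewrites this as the character-valued identity
\[\sum_{x^m=(gx)^m}\chi_V(x^{mr})=\sum_{x^m=(g^rx)^m}\chi_V(x^m),\]
valid for every $V\in\Irr(C_G(g))$.

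Now I would multiply both sides by $\chi_V(y^{-1})$ for fixed $y\in C_G(g)$ and sum over $V\in\Irr(C_G(g))$. Second orthogonality turns each side into $|C_{C_G(g)}(y)|$ times the cardinality of the set of $x$ for which $x^{mr}$, respectively $x^m$, is $C_G(g)$-conjugate to $y$. Since the $r$-th power map is a bijection on $C_G(g)$ commuting with conjugation, it permutes the conjugacy classes of $C_G(g)$ without changing their sizes; in particular, the condition on the left is equivalent to $x^m\sim_{C_G(g)}y^s$ with $rs\equiv1\pmod e$. Each side of \eqref{eq:6} is visibly a $C_G(g)$-class function of $y$ (conjugating $x$ by $h\in C_G(g)$ transports the solution set at $y$ to the solution set at $hyh^{-1}$), so after relabelling $y\mapsto y^r$ the class-function identity collapses to the pointwise statement \eqref{eq:6}. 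The case $y\notin C_G(g)$ is trivial: the relation $x^m=(gx)^m$ already forces $x^m\in C_G(g)$, so both sides of \eqref{eq:6} vanish.

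The delicate point will be the final orthogonality step: one has to match the substitution $x^m\leftrightarrow x^{mr}$ appearing on one side of the character-valued identity with the $r$-th power bijection on the conjugacy classes of $C_G(g)$, and then check that the resulting class-function identity really does descend to the pointwise statement at $y$ and $y^r$ as written in \eqref{eq:6}. Once this bookkeeping is carried out, the corollary is a purely formal consequence of the KSZ formula and the preceding Corollary.
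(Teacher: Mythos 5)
Your proof is correct and follows essentially the same route as the paper: both arguments rest on the identity $\nu_m(\IndtoYD_g(\adams_r V))=\nu_m(\IndtoYD_{g^r}(V))$ from the preceding Corollary, combined with the Kashina--Sommerh\"auser--Zhu indicator formula and character orthogonality on $C_G(g)=C_G(g^r)$. The paper merely shortcuts your explicit orthogonality and class-function bookkeeping by quoting the already-inverted form of that formula, namely \cite[Prop.~2.3]{2012arXiv1208.4153I}, which expresses $|\{x\in G\mid x^m=(gx)^m=y\}|$ directly as $\sum_{\chi}\nu_m(\IndtoYD_g(\chi))\chi(y)$, so that the corollary follows at once by reindexing the character sum along the bijection $\adams_r$ and evaluating at $y$.
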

\begin{proof}
  According to \cite[Prop.~2.3]{2012arXiv1208.4153I} we have
  \begin{equation*}
     |\{x\in G|x^m=(gx)^m=y\}|=\sum_{\chi\in\Irr(C_G(g))}\nu_m\left(\IndtoYD_g(\chi)\right)\chi(y).
  \end{equation*}
  Since $\adams_r$ is a bijection on characters and  $\nu_m\circ\auta_r=\nu_m\circ\autb_r$,
  \begin{align*}
    \sum_{\chi}\nu_m\left(\IndtoYD_g(\chi)\right)\chi
  &=\sum_{\chi}\nu_m\left(\IndtoYD_g(\adams_r(\chi))\right)\adams_r(\chi)\\
      &=\sum_{\chi}\nu_m\left(\IndtoYD_{g^r}(\chi)\right)\adams_r(\chi),
  \end{align*}
  and
  \begin{align*}
\sum_{\chi}\nu_m\left(\IndtoYD_{g^r}(\chi)\right)\adams_r(\chi)(y)&=\sum_{\chi}\nu_m\left(\IndtoYD_{g^r}(\chi)\right)\chi(y^r)\\
      &=|\{x\in G|x^m=(g^rx)^m=y^r\}|
  \end{align*}
  finishes the proof
\end{proof}

\section{Some small Examples}
\label{sec:examples}

If $r=-1$, the Adams operator $\adams_{-1}$ is complex conjugation of characters, and $\auta_{-1}$ is complex conjugation of the ``local'' characters. It is known that higher Frobenius-Schur indicators of Drinfeld doubles are always real, so the operations $\auta_{-1}$ and $\autb_{-1}$ on $\lYD G$ preserve indicators, for any group $G$.
\begin{Expl}
  Consider the semidirect product
  \begin{equation*}
    G=\Z_3^2\semidir \Z_2
  \end{equation*}
  where $\Z_2$ acts on the first factor of $\Z_3^2$ by inversion, and trivially on the second. In other words
  \begin{equation*}
    G=\langle a,b,\tau|a^3=b^3=\tau^2=1,\tau a=a^2\tau,\tau b=b\tau\rangle.
  \end{equation*}
  The centralizer of $g=a$ is $C_G(a)=\langle a,b\rangle\cong C_3^2$. We denote its (linear) characters by $\chi_{uv}$ with $\chi_{uv}(a)=\zeta^u$ and $\chi_{uv}(b)=\zeta^v$, where $\zeta$ is a primitive third root of unity. By construction $a\inv$ is conjugate to $a$ by $\tau$. Note, though, that complex conjugation of $\chi_{uv}$ affects both its indices, while conjugation by $\tau$ only affects the first. The following diagram lists a dashed line between $\chi$ and $\chi'$ if $\IndtoYD_a(\chi')=\auta_{-1}(\IndtoYD_a(\chi))$ and a solid line if $\IndtoYD_a(\chi')=\autb_{-1}(\IndtoYD_a(\chi))$:
  \begin{equation*}    \xymatrix{\chi_{00}&\chi_{01}
      \ar@{--}[r]&\chi_{02}&\chi_{10}\ar@<.3ex>@{--}[r]\ar@<-.3ex>@{-}[r]&\chi_{20}&\chi_{11}\ar@{--}[r]\ar@{-}[d]&\chi_{22}\ar@{-}[d]\\
    &&&&&\chi_{21}\ar@{--}[r]&\chi_{12}}
  \end{equation*}
  Thus, the nine characters of $C_G(a)$ fall in four classes of necessarily I-equivalent characters.
\end{Expl}

\begin{Expl}
  Now consider $G=(C_3\times C_2^2)\semidir C_2$ with $C_2$ acting on $C_3$ by inversion, and on $C_2^2$ by switching factors. Thus
  \begin{equation*}
    G=\langle a,b,c,\tau|a^3=b^2=\tau^2=1,\tau a=a^2\tau,\tau b=c\tau,ab=ba,bc=cb\rangle.
  \end{equation*}
  We consider $g=a$ with $C_G(a)=\langle a,b,c\rangle\cong C_3\times C_2^2$, and label its characters $\chi_{uvw}$ with $\chi_{uvw}(a)=\zeta^u$, $\chi_{uvw}(b)=(-1)^v$, and $\chi_{uvw}(c)=(-1)^w$. Again, $a$ is conjugate to its inverse by $\tau$. Complex conjugation of $\chi_{uvw}$ affects only its first index, while conjugation by $\tau$ affects the first and switches the other two indices. Recording as in the previous example how $\auta_{-1}$ and $\autb_{-1}$ relate the various $\IndtoYD_a(\chi_{uvw})$ we obtain the following picture:
  \begin{equation*}    \xymatrix{\chi_{000}&\chi_{001}\ar@{-}[d]&\chi_{100}\ar@<.3ex>@{--}[r]\ar@<-.3ex>@{-}[r]&\chi_{200}&\chi_{101}\ar@{--}[r]\ar@{-}[d]&\chi_{201}\ar@{-}[d]\\ \chi_{011}&\chi_{010}&\chi_{111}\ar@<.3ex>@{--}[r]\ar@<-.3ex>@{-}[r]&\chi_{211}&\chi_{210}\ar@{--}[r]&\chi_{110}}
  \end{equation*}
  In particular, the twelve simples of $\lYD G$ in the image of $\IndtoYD_a$ fall in six orbits under the actions of the two autoequivalences.
\end{Expl}

\begin{Expl}
  Consider $G=\langle a,b|a^{27}=b^3=1,ba=a^{19}b\rangle$, that is $G\cong C_{27}\semidir C_3$, with the action of the generator $b$ of $C_3$ sending the generator $a$ of $C_{27}$ to $a^{19}$.

  Consider $g=a^3b$. It is easy to check that $C_G(g)=\langle a^3,b\rangle\cong C_9\times C_3$. We label the characters of $C_G(g)$ as $\chi_{uv}$ with $\chi_{uv}(a^3)=\zeta^u$ and $\chi_{uv}(b)=\zeta^{3v}$, where $\zeta$ is a primitive ninth root of unity.

  We will study $\auta_2$ and $\autb_2$ (note that $2$ is a primitive root modulo $27$). Clearly $\auta_2$ acts on $\IndtoYD_g(\chi_{uv})$ by doubling both indices (modulo $9$ and $3$, respectively). On the other hand $\autb_2$ will map the image of $\IndtoYD_g$ to the image of $\IndtoYD_{g^2}$, and $g^2=a^6b^2$ is not conjugate to $g$. However, $g^4=a^{12}b$ is conjugate to $g$: For $t=a^2b$ we have $tg^4=a^2ba^{12}b=a^{14}b^2$ and $gt=a^3ba^2b=a^{41}b^2=a^{14}b^2$, so $t\adhit g^4=g$. Note $t\inv=a^7b^2$. To calculate $t\adhit\chi_{uv}$ we need $t\inv\adhit a^3=a^3$ and $t\inv\adhit b=a^7b^3a^2b=a^9b$. Thus $t\adhit\chi_{uv}(a^3)=\chi_{uv}(a^3)=\zeta^u$ and $t\adhit\chi_{uv}(b)=\chi_{uv}(a^9b)=\zeta^{3u+3v}$. In other words $t\adhit\chi_{uv}=\chi_{u,v+u}$. The following diagram lists the $27$ characters of the centralizer. Characters connected by a dashed, resp.\ solid line have their images under $\IndtoYD_g$ mapped to each other by $\auta_2$, resp. $\autb_4$. To avoid overloading the diagram, we have not indicated the direction of the actions, nor closed the cycles of a repeated application of the two mappings.
      \newcommand\beide{\ar@<.3ex>@{|-->}[r]\ar@<-.3ex>@{|->}[r]}
      \newcommand\solidd{\ar@{-}[d]}
      \newcommand\dashr{\ar@{--}[r]}
  \begin{equation*}
    \xymatrix{%
      \chi_{00}&&\chi_{01}\dashr&\chi_{02}\\
      \chi_{10}\solidd\dashr&\chi_{20}\solidd\dashr&\chi_{40}\solidd\dashr&\chi_{80}\solidd\dashr&\chi_{70}\solidd\dashr&\chi_{50}\solidd\\
      \chi_{11}\solidd\dashr&\chi_{22}\solidd\dashr&\chi_{41}\solidd\dashr&\chi_{82}\solidd\dashr&\chi_{71}\solidd\dashr&\chi_{52}\solidd\\
      \chi_{12}\dashr&\chi_{21}\dashr&\chi_{42}\dashr&\chi_{81}\dashr&\chi_{72}\dashr&\chi_{51}\\
      \chi_{30}\dashr&\chi_{60}&\chi_{31}\dashr&\chi_{62}&\chi_{32}\dashr&\chi_{61}
    }
  \end{equation*}
  Thus, the $54$ simples associated to $g$ and $g^2$ fall in $6$ orbits under the combined actions of $\auta_{2}$ and $\autb_{2}$; the elements in each orbit are I-equivalent. 

  The result for $g=b$ is substantially different, although $C_G(b)=\langle a^3,b\rangle$ as well. Since $b^4=b$, the action of $\autb_4$ on the image of $\IndtoYD_b$ is trivial. Thus, the $54$ simples associated to $b$ and $b^2$ fall into $8$ orbits.
\end{Expl}

\begin{Rem}
  In the three examples studied in this section, one can form an autoequivalence $\doubleadams_r$ acting within the image of one of the functors $\IndtoYD_g$. It so happens that in each of the examples this autoequivalence is induced by an automorphism of the group $G$ under consideration. In the first example this is the automorphism fixing $a$ and $\tau$ and inverting $b$. In the second example it is the automorphism fixing $a$ and $\tau$ while switching $b$ and $c$. In the third example the automorphism of $G$ sending $a$ to $a^{22}$ and $b$ to $a^{18}b$ can be checked to (exist and) send $a^3b$ to itself, $a^3$ to $a^{12}=(a^3)^4$ and $b$ to $a^{18}b=(a^3)^6b$, so that its effect on $\IndtoYD(\chi_{uv})$ is to map it to $\IndtoYD(\chi_{4u,v+2u})$, the same as $\autb_{4}\inv$ followed by $\auta_4$.

  In particular, all the I-equivalences of characters in these cases that are accounted for by the joint operations of $\auta$- and $\autb$-equivalences can also be accounted for by $\auta$-equivalences and group automorphisms, thus leaving out the somewhat more mysterious $\autb$-equivalences.

  We have remarked in the previous section that in general the $\doubleadams_r$-autoequivalences are not in general monoidal, so cannot, \emph{a fortiori}, come from group automorphisms. We haven't, however, worked out an example of this generality.
\end{Rem}

\section{Doubles of symmetric groups}
\label{sec:doubl-symm-groups}

One of the author's original motivations for the study of the functors $\autb_r$ was to explain an observation made by Courter \cite{MR3103664} during a computer-aided investigation into the higher Frobenius-Schur indicators of the simple modules of Drinfeld doubles of symmetric groups. To wit, many of these simple modules fall into rather large I-equivalence classes; for larger ranks of the symmetric groups each I-equivalence classes is contained in the image of one of the functors $\IndtoYD_g$. We will see in this section that the functors $\autb_r$ \emph{do not help in the least} to explain the abundance of I-equivalent modules.

The fact that Galois conjugate characters of $C_G(g)$ yield I-equivalent representations of $D(G)$ is (albeit not mentioned in \cite{MR3103664}) quite obvious, but accounts for only relatively few of the I-equivalent representations. To discuss a few examples, we use the notations of \cite{MR3103664} as well as the character tables of centralizers collected there in the appendix.

Thus, the characters of $C_{S_5}(u_i)$ are integer valued for $i=1,2,3$, but $\chi_{1.3}$ and $\chi_{1.5}$ are I-equivalent, and the six characters associated to $u_2$ fall in only two I-equivalence classes, as do the five characters associated to $u_3$. Things look slightly better for $u_4$, where $\eta_{4.3}=\adams_{-1}(\eta_{4.4})$ and $\eta_{4.5}=\adams_{-1}(\eta_{4.6})$; this does not account, hovever, for $\chi_{4.1}$ falling in the same I-equivalence class with $\chi_{4.3}$ and $\chi_{4.4}$, nor for $\chi_{4.2}$ falling in the I-equivalence class of the two other simples. Worse, $C_{S_5}(u_5)$ has the same character table as $C_{S_5}(u_4)$, but here all the six associated simples are I-equivalent. For $u_6$ we get four characters in three Galois conjugacy classes, but only one I-equivalence class, and for $u_7$ five characters in two conjugacy classes, and again only one I-equivalence class.

Summing up, Galois conjugacy, or the operations $\auta_r$, account for some, but by far not all I-equivalences. We have seen in the previous section that the operation $\autb_r$ can produce I-equivalences not accounted for by $\auta_r$, but unfortunately in the present case at least computer experiments suggest that this phenomenon does not occur at all.

\begin{Conj}
  For any $r$ relatively prime to $\exp(S_n)$ and $V\in\lYD{S_n}$ we have $\auta_r(V)=\autb_r(V)$.
\end{Conj}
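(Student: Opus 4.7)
The plan is to reduce the conjecture, via the local descriptions of $\auta_r$ and $\autb_r$, to a combinatorial comparison of conjugacy classes in a wreath product, which can then be verified directly. For a simple $V=\IndtoYD_g(W)$ we have $\auta_r(V)\cong\IndtoYD_g(\adams_r W)$ and $\autb_r(V)\cong\IndtoYD_{g^r}(W)$. Because $r$ is coprime to $\exp(S_n)$, the permutations $g$ and $g^r$ share a cycle type and are thus conjugate in $S_n$; fix $x\in S_n$ with $xg^rx^{-1}=g$. Applying the base-change identity $\IndtoYD_{x\adhit h}(x\adhit U)\cong\IndtoYD_h(U)$ to $h=g^r$ yields $\IndtoYD_{g^r}(W)\cong\IndtoYD_g(x\adhit W)$, where $(x\adhit W)(k)=W(x^{-1}kx)$ on the common centralizer $C_{S_n}(g)=C_{S_n}(g^r)$. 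The conjecture therefore reduces to producing $x$ with $x\adhit W\cong\adams_r(W)$ for every $W\in\Irr(C_{S_n}(g))$; equivalently, by character theory, to requiring $x^{-1}hx$ and $h^r$ to be $C_{S_n}(g)$-conjugate for every $h\in C_{S_n}(g)$.

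To build $x$ explicitly, factor $C_{S_n}(g)=\prod_k(C_k\wr S_{m_k})$ according to cycle lengths and treat each factor separately. For a single cycle length, label the $k$-cycles $c_i$ of $g$ uniformly via bijections $\phi_i\colon\{1,\dots,k\}\to B_i$ with $c_i=\phi_i\gamma\phi_i^{-1}$ where $\gamma=(1\,2\,\cdots\,k)$, pick $\pi\in S_k$ with $\pi\gamma^r\pi^{-1}=\gamma$, and set $x=\prod_i\phi_i\pi\phi_i^{-1}$. Direct checks show $xg^rx^{-1}=g$; that the uniformity of $\pi$ across blocks makes $x$ commute with the block-permutation subgroup $S_m\subset C_k\wr S_m$; and that conjugation by $x$ acts on the base $\langle c_1,\dots,c_m\rangle$ componentwise as the $s$-th power, with $rs\equiv1\pmod k$. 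For $h=(c;\sigma)\in C_k\wr S_m$ this yields $x^{-1}hx=(c^r;\sigma)$, where $c^r=(c_1^r,\dots,c_m^r)$, while the $r$-th power in the wreath product expands to $h^r=(N_{\sigma,r}(c);\sigma^r)$ with $N_{\sigma,r}(c)=c\cdot\sigma(c)\cdot\sigma^2(c)\cdots\sigma^{r-1}(c)$ (componentwise product).

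The main step is then the identity that $(c^r;\sigma)$ and $(N_{\sigma,r}(c);\sigma^r)$ are conjugate in $C_k\wr S_m$. Conjugacy classes in the wreath product of an abelian group by $S_m$ are classified by colored cycle types: the cycle type of the $S_m$-part, together with, for each cycle $(i_1,\dots,i_\ell)$ of that part, the colored cycle product $c_{i_1}c_{i_2}\cdots c_{i_\ell}\in C_k$. Coprimality of $r$ with every cycle length of $\sigma$ makes $\sigma$ and $\sigma^r$ have identical cycle types. The cycle product of $(c^r;\sigma)$ along a $\sigma$-cycle is trivially $(c_{i_1}\cdots c_{i_\ell})^r$; for $h^r$, the cycle product along the $\sigma^r$-cycle $(i_1,i_{1+r},\dots,i_{1+(\ell-1)r})$ expands into a double product $\prod_{a\in\Z/\ell}\prod_{b\in\Z/r}c_{i_{1+ar-b}}$ in which each index $i_j$ is hit exactly $r$ times (because $r$ is invertible mod $\ell$), again producing $(c_{i_1}\cdots c_{i_\ell})^r$. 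Hence the colored cycle types coincide, $x^{-1}hx$ is conjugate to $h^r$ in $C_{S_n}(g)$, and reassembling across cycle lengths finishes the proof.

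The principal obstacles are the combinatorial identity of the last step and the verification that the uniformly constructed $x$ truly commutes with the block-permutation factor of the wreath product (rather than being forced to permute the blocks of $g$, which would complicate the explicit formula $x^{-1}hx=(c^r;\sigma)$). Since the paper documents only computational evidence up to $S_{23}$, it would be prudent to test this plan against the low-rank GAP data before claiming a complete proof; a subtlety in the embedding of $C_k\wr S_m$ into $S_{km}$, or in the normalizer-coset freedom for $x$, could conceivably spoil the otherwise elementary argument.
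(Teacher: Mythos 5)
You should know that the paper does \emph{not} prove this statement: it is stated as a conjecture, supported only by the reduction to a group-theoretic condition and by GAP computations up to $S_{23}$ (``a fact that we cannot, for the moment, prove''). Your first paragraph reproduces exactly the paper's own reduction: using $\auta_r(\IndtoYD_g(W))\cong\IndtoYD_g(\adams_r(W))$, $\autb_r(\IndtoYD_g(W))\cong\IndtoYD_{g^r}(W)$ and the base-change isomorphism, the claim becomes the existence of $x$ with $x\adhit g^r=g$ such that $x^{-1}hx$ and $h^r$ are conjugate in $C_{S_n}(g)$ for all $h$ in that centralizer (and since any two such conjugators differ by an element of $C_{S_n}(g)=C_{S_n}(g^r)$, the condition is independent of the choice of $x$, which disposes of your worry about ``normalizer-coset freedom''). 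Everything after that goes beyond the paper, and I could not find a gap in it. The decomposition $C_{S_n}(g)=\prod_k(C_k\wr S_{m_k})$, the uniform conjugator $x=\prod_i\phi_i\pi\phi_i^{-1}$ commuting with the chosen complement $S_{m_k}$ and acting on the base by the $r$-th power under $h\mapsto x^{-1}hx$, the power formula $h^r=(N_{\sigma,r}(c);\sigma^r)$, and the colored-cycle-type computation showing that both $(c^r;\sigma)$ and $h^r$ have cycle product $(c_{i_1}\cdots c_{i_\ell})^r$ along corresponding cycles all check out; the needed coprimality $\gcd(r,\ell)=1$ holds because every cycle length $\ell$ of $\sigma\in S_{m_k}$ satisfies $\ell\le m_k\le n$ and hence divides $\exp(S_n)$, and similarly $\gcd(r,k)=1$ guarantees the existence of $\pi$ with $\pi\gamma^r\pi^{-1}=\gamma$. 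In short, if written out carefully (including the small bookkeeping for negative $r$ and for fixed points, i.e.\ the factors with $k=1$ or $\ell=1$), your argument settles the conjecture rather than merely matching the paper; the one thing to be scrupulous about is that every element of the $k$-part of the centralizer really has the form $(c;\sigma)$ with $\sigma$ in the \emph{specific} complement determined by the labelings $\phi_i$, which is what makes the formula $x^{-1}hx=(c^r;\sigma)$ legitimate.
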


Note that for any $g\in S_n$ and $r$ relatively prime to $\exp(S_n)$, the power $g^r$ is conjugate to $g$ since it has the same cycle structure. Thus $\autb_r(\IndtoYD_g(\Rep(C_{S_n}(g))))=\IndtoYD_g(\Rep(C_{S_n}(g)))$. Assume $g^r=t\adhit g$ for $t\in S_n$. Then $\autb_r$ and $\auta_r$ agree on the image of $\IndtoYD_g$ if and only if $\chi(t\adhit x)=\chi(x^r)$ for every $x\in C_{S_n}(g)$, that is, if and only if $t\adhit x$ and $x^r$ are conjugate in $C_{S_n}(g)$.

Since $\auta_{rs}=\auta_r\auta_s$ and $\autb_{rs}=\autb_r\autb_s$, it suffices to verify this for $r$ in a set of generators for the prime residues modulo $\exp(S_n)$. The following piece of GAP \cite{GAP4} code defines a function comparing the actions of $\auta_r$ and $\autb_r$ for all $r$ in such a set of generators, for the case of a symmetric group $S_n$ of given degree $n$. It uselessly prints an unstructured list of permutations and numbers to assure the user of the progress it is making, but its main purpose is to stop and alert the user, should ever $g$ be found and $x\in C_{S_n}(g)$ such that $g^r=t\adhit g$ but $x^r$ not conjugate to $t\adhit x$ in $C_{S_n}(g)$. We have tested the symmetric groups up to \bishierher; for larger degrees GAP reported memory problems, and no further attempts were made to circumvent these or use a bigger machine.

\lstset{language=GAP}
\begin{figure}\caption{GAP code to check $\auta_r=\autb_r$ on $\lYD{S_n}$}\label{fig:GAP}
\lstinputlisting{compare_auta_autb.gap}
\end{figure}

\clearpage
\printbibliography

\end{document}